\newtheorem{thm}{Theorem}[section]
\newtheorem{lem}{Lemma}[section]
\newtheorem{pro}{Proposition}[section]
\newtheorem{Def}{Definition}[section]
\newtheorem{obs}{Observation}[section]
\newtheorem{ex}{Example}[section]
\newcommand{\Aff}[1]{\operatorname{Aff}(#1)}
\newcommand{\Proj}[1]{\operatorname{Proj}(#1)}
\title{Vertex Collapsing and Cut Ideals}
\author{Ivan Martino\footnote{Stockholm University, Department of Mathematics, email: martino@math.su.se}}
\begin{document}

\maketitle

\begin{abstract}
In this work we study how some elementary graph operations (like the disjoint union) and the collapse of two vertices modify the cut ideal of a graph. 
They pave the way for reducing the cut ideal of every graph to the cut ideal of smaller ones.

To deal with the collapse operation we generalize the definition of cut ideal given in literature, introducing the concepts of edge labeling and edge multiplicity: in fact we state the \emph{non-classical behavior} of the cut ideal. 
Moreover we show the transformation of the toric map hidden behind these operations. 
\end{abstract}

\section{Introduction}
Recently Sturmfels and Sullivant \cite{Sturmfels-Sullivant} generalized a class of toric ideals which appears in phylogenetics and algebraic statistics \cite{Diaconis-Sturmfels}, via the cut ideals. 
\emph{Cuts} are a key concept in graph theory and combinatorial optimization. 
Monomial cuts ideals have been further studied in \cite{engstrom}, \cite{Nagee-Petro}, \cite{oltreanu}.

Geometrically, the cut ideal of a graph $G$ with $e$ edges comes from the cut polytope $Cut^{\square}(G)$, the convex hull in $\mathbb{R}^{e}$ of the \emph{cut semimetrics} \cite{Deza-Laurent}.

Our first aim is to understand the geometry of the cut varieties.
In particular we study the change of the cut variety under some elementary graph operations (relabeling of vertices and edges, change of multiplicity and disjoint union of graphs) and under the \emph{collapse} of two non-connected vertices.

To understand the collapse operation we consider the graph $P_3$, with vertices $\{1,2,3\}$ and edges $\{\{1,2\},\{2,3\}\}$.
Let us collapse the vertices $1$ and $3$. 
The result is a graph with two vertices and one edge, $K_2$:
This edge should be thought of as a \emph{double} edge. 

In Section \ref{sec-generalized-cut-ideal}, we generalize the concept of cut ideal to a graph $G$ with edge multiplicities and with non trivial edge labels.
In the \emph{classical} case, the multiplicities of all the edges are set to be one and the labeling is the canonical labeling (the label of the edge $\{i,j\}$ is $(i,j)$).  Otherwise, we are in the \emph{non-classical} case.

In Section \ref{sec-elementary-operations}, we explain how to tackle the non-classical setting and we deal with the elementary operations clique $0$-sum \cite{Sturmfels-Sullivant} and disjoint union of two graphs $G\sqcup H$. 

After defining the collapsed graph $G_{1\equiv k}$, in Section \ref{sec-collapsing}, Theorem \ref{thm-collapsing-generic-form} shows how to obtain the cut ideal of $G_{1\equiv k}$ in two steps: $(kill)$, where we delete the generators containing \emph{non-feasible} variables, and $(substitute)$, where we modify the name of the variables according to the collapse.
The collapse operation we work with is different than the clique $i$-sum $G\#_i H$ of $G$ and $H$. It is possible to construct every clique $i$-sum with a finite number of collapse operations.
For this reason, Theorem \ref{thm-collapsing-generic-form} is a generalization of Theorem 2.1 by Sturmfels and Sullivant\cite{Sturmfels-Sullivant}. 

The collapse and the disjoint union operations allow to construct every graph from more elementary graphs: they pave the way for reducing the cut ideal of every graph to the cut ideal of simpler ones.

\paragraph{Notation:} We denote a graph by a pair $G=(V_G;E_G)$, with $V_G=[n]$. 
%
%
Moreover $K_n$, $P_n$, $C_n$ and $S_n$ denote respectively the complete $n$-graph, the $n$-path graph, the $n$-cycle graph and the graph with $n$ isolated vertices graph.

\section{The Generalized Cut Ideal}\label{sec-generalized-cut-ideal}
In this section we generalize the definition of cut ideal.

Let $\Pi_n$ be set of disjoint unordered partitions $A|B$ of $[n]$, that is $A\cup B=\{1,2,\dots, n\}$ and $A\cap B =\emptyset$.
$A|B$ is the same partition as $B|A$ and we denote $(A|B)^*=B|A$. 
We define $Cut_{G}(A|B) = \{\{i, j\} \in E_G : i \in A \textrm{ and } j \in B \text{ or } i \in B \textrm{ and } j \in A\}$. 
Let $R_n$ be $\mathbb{K}[r_{A|B}: A|B \in \Pi_n]$.
\begin{ex}\em
  $R_{4}=\mathbb{K}[r_{1|234}, r_{2|134}, r_{3|124}, r_{4|123}, r_{12|34}, r_{13|24}, r_{14|23}, r_{1234|\cdot}]$.
\end{ex}

A \emph{labeling} of $E$ (or of the graph $G$) is a surjective map $l:E\rightarrow A_G$. 
%

\begin{ex}\em\label{ex-singleton-labeling}
	The graph $K_3$ could be labelled by the map sending all the edges to $\{a\}$.
\end{ex}

There is a particular labeling $c:E\rightarrow E$ that maps an edge $\{i, j\}\in E$ to its endpoints $(i,j)$. This labeling is a one to one map, called the \emph{canonical labeling}. 
All the classical definitions are in the canonical labeling.

Let $T_{A}$ be $\mathbb{K}[s_{a}^{\pm}, t_{a}^{\pm}: a \in A]$. 

\begin{ex}\em
	Let $G$ be any graph with only one edge $E_G=\{\{1,2\}\}$ and with the canonical labeling (so $A_G=\{(1,2)\}$), then $T_{E}=\mathbb{K}[s_{(1,2)}, s_{(1,2)}^{-1}, t_{(1,2)}, t_{(1,2)}^{-1}]$.
\end{ex}

A \emph{multiplicity map} of $E$ (or of the graph $G$) is a map $\sigma:E \rightarrow \mathbb{Z}\setminus\{0\}$. $\sigma(e)$ is called multiplicity of $e$.

\begin{ex}\em
  	Any graph $G$ has the \emph{trivial multiplicity map} setting $\sigma(e)=1$ for each edge $e$.
\end{ex}

\begin{ex}\em\label{ex-multiple-graph-K_2}
	We assign a multiplicity map to $K_2$ setting $\sigma(1,2)\in\mathbb{Z}\setminus\{0\}$.
\end{ex}

We define
\begin{eqnarray*}
  \phi_{(G, l, A,\sigma)}: R_n 		&\rightarrow & 	T_A,\\
	  r_{A|B}&\mapsto&\prod_{\{i, j\}\in Cut_{G}(A|B)} s_{l(i,j)}^{\sigma(i,j)}\prod_{\{i, j\}\in E_G\setminus Cut_{G}(A|B)} t_{l(i,j)}^{\sigma(i,j)}.
\end{eqnarray*}

Roughly speaking, we send the variable $r_{A|B}$ to the product of variables in $T_A$ including $s_{l(i,j)}^{\sigma(i,j)}$ if $A|B$ separates the extremal vertices $i$ and $j$ of the edge $\{i,j\}$ and including $t_{l(i,j)}^{\sigma(i,j)}$ otherwise. 
This explains the names for $s$, separated, and $t$, together. 

If the multiplicity map has value in $\mathbb{N}$, then the map $\phi_{(G, l, A,\sigma)}$ has values in  $T_A'=\mathbb{K}[s_{a}, t_{a}: a \in A]$.

\begin{ex}\em\label{ex-classical-map}
  In the classical case (that is with the canonical labeling and the trivial multiplicity) the map $\phi_{(G, c, E, 1)}$ is 
\begin{eqnarray*}
  \phi_G: R_n 		&\rightarrow & 	T_E',\\
	  r_{A|B}	&\mapsto&	\prod_{\{i, j\}\in Cut(A|B)} s_{(i,j)}\prod_{\{i, j\}\in E_G\setminus Cut(A|B)} t_{(i,j)}.
\end{eqnarray*}
\end{ex}

\begin{obs}
In the classical case the map $\phi_G$ determines $G$ uniquely. 
\end{obs}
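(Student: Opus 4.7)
The plan is to reconstruct $G = ([n], E_G)$ directly from the assignment $r_{A|B} \mapsto \phi_G(r_{A|B})$. The integer $n$ is visible from the indexing set of the generators of $R_n$, so the task reduces to recovering $E_G$.

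The key observation is that singleton partitions suffice. For a fixed $i \in [n]$, an edge $\{j,k\} \in E_G$ belongs to $Cut(\{i\}\,|\,[n]\setminus\{i\})$ if and only if exactly one of $j, k$ equals $i$, i.e.\ if and only if $\{j,k\}$ is incident to $i$. Hence
\begin{equation*}
\phi_G\bigl(r_{\{i\}\,|\,[n]\setminus\{i\}}\bigr) \;=\; \prod_{\{i,k\}\in E_G} s_{(i,k)} \prod_{\{j,k\}\in E_G,\; i\notin\{j,k\}} t_{(j,k)},
\end{equation*}
and the indices of the $s$-variables appearing in this monomial are in bijection with the neighbors of $i$ in $G$.

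Letting $i$ range over $[n]$ and collecting all such indices yields every edge of $G$, since each edge $\{i,k\}$ contributes an $s$-factor both in $\phi_G(r_{\{i\}|[n]\setminus\{i\}})$ and in $\phi_G(r_{\{k\}|[n]\setminus\{k\}})$. This determines $E_G$, hence $G$. There is no real obstacle to the argument: since the canonical labeling $c$ is injective by definition, distinct edges produce distinct $s$-indices and the reading-off procedure is unambiguous. The content of the observation is precisely that this reconstruction fails once one leaves the classical setting (a non-injective label $l$ or a non-trivial multiplicity $\sigma$ would merge or rescale the $s$-factors), which motivates the non-classical discussion that follows.
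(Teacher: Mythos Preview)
The paper states this observation without proof, so there is no argument to compare against. Your proof is correct: the singleton partitions recover the neighborhoods of each vertex via the $s$-indices, and since the canonical labeling is injective this determines $E_G$ unambiguously.

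A slightly quicker route, in the same spirit, is to read off $E_G$ from a single image: the partition $[n]\,|\,\emptyset$ separates no edge, so
\[
\phi_G\bigl(r_{[n]\,|\,\emptyset}\bigr)\;=\;\prod_{\{i,j\}\in E_G} t_{(i,j)},
\]
and the set of $t$-indices appearing is exactly $E_G$. This avoids the union over $i$, but both arguments rest on the same point you identify --- injectivity of the canonical labeling --- which is precisely what breaks in the non-classical setting.
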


\begin{Def}\label{def-cut-ideal}
Let $(G, l,A, \sigma)$ be a labelled graph with multiplicity; The \emph{cut ideal} of $G$, $I_{(G, l,A, \sigma)}$, is the kernel of the map $\phi_{(G,l, A,\sigma)}$. The \emph{affine cut variety} of $G$, $\Aff{G,l, A,\sigma}$, is the affine variety with the coordinate ring $\Gamma_{(G, l,A,\sigma)}=\nicefrac{R_n}{I_{(G, l,A,\sigma)}}$.
\end{Def}
Fixing a grading for $\Gamma_{(G, l,A,\sigma)}$ it could be possible define also the \emph{projective cut variety}, $\Proj{G,l, A,\sigma}$, as the projective variety with the graded coordinate ring $\Gamma_{(G, l,A,\sigma)}$.
In this paper, we avoid it.
\begin{ex}\em\label{ex-classical-and-non-3-seg}
  We study $P_3$ \textbf{a)} in the classical case; \textbf{b)} with trivial multiplicity but with the labeling given in Example \ref{ex-singleton-labeling}; \textbf{c)} with the same labeling but we also fix the multiplicity map $\sigma(1,2)=-\sigma(2,3)=-1$. One has:
  \begin{eqnarray*}
  	\phi_{P_3}:\mathbb{K}[r_{1|23}, r_{2|13}, r_{3|12}, r_{123|\cdot}] &\rightarrow& \mathbb{K}[s_{(1,2)}, s_{(2,3)}, t_{(1,2)}, t_{(2,3)}];\\
	\phi_{(P_3, \{a\})}:\mathbb{K}[r_{1|23}, r_{2|13}, r_{3|12}, r_{123|\cdot}] &\rightarrow& \mathbb{K}[s_{a}, t_{a}].\\
	\phi_{(P_3, \{a\}, \sigma)}:\mathbb{K}[r_{1|23}, r_{2|13}, r_{3|12}, r_{123|\cdot}] &\rightarrow& \mathbb{K}[s_{a}^{\pm 1}, t_{a}^{\pm 1}].
  \end{eqnarray*}
  and 
  \begin{center}
  \begin{tabular}{|c|c|c|c|}\hline
	 	variable & $\phi_{P_3}$ & $\phi_{(P_3, \{a\})}$ 	& $\phi_{(P_3, \{a\}, \sigma)}$\\ \hline
	$r_{1|23}$ 	& $s_{(1,2)}t_{(2,3)}$ & $s_{a}t_{a}$		& $s_{a}^{-1}t_{a}$\\ \hline  
    	$r_{2|13}$ 	& $s_{(1,2)}s_{(2,3)}$ & $s_{a}s_{a} = s_a^2$	& $s_{a}^{-1}s_{a} = 1$\\ \hline
    	$r_{3|12}$ 	& $t_{(1,2)}s_{(2,3)}$ & $t_{a}s_{a}$		& $t_{a}^{-1}s_{a}$\\ \hline
    	$r_{123|\cdot}$ & $t_{(1,2)}t_{(2,3)}$ & $t_{a}t_{a} = t_a^2$ 	& $t_{a}^{-1}t_{a} = 1$\\ \hline
  \end{tabular}
  \end{center}
  Thus, we have $I_{P_3}=(r_{1|23}r_{3|12}-r_{123|\cdot}r_{2|13})$, $I_{(P_3,\{a\})}=(r_{1|23}-r_{3|12}) \oplus I_{P_3}$ and $I_{(P_3,\{a\}, \sigma)}=(r_{2|13}-1, r_{123|\cdot}-1, r_{1|23}r_{3|12} -1)$.
  The classical and the non-classical cut ideals are, hence, different. Looking at the cut varieties we get that $\operatorname{dim}(\Aff{P_3})=3$, $\operatorname{dim}(\Aff{P_3, \{a\}})=2$ and $\operatorname{dim}(\Aff{P_3, \{a\}, \sigma})=1$.
\end{ex}

\section{The Elementary Operations}\label{sec-elementary-operations}
Since $\phi_{(G, l, A, \sigma)}$ is a toric map then the cut varieties are toric varieties, thus following \cite{Sottile_toric}, we associate to $\phi_{(G, l, A, \sigma)}$ the matrix $\mathcal{A}_{(G, l, A, \sigma)}$ having as columns the exponents of the monomial image of $r_{A|B}$ for each partition in $\Pi_n$. 
The generators of $I_{G}$ corresponds to the elements in the kernel of the linear map defined by $\mathcal{A}_{G}$. 

We want to study which correlation there is between elementary operations on the graph and linear transformations of the matrices $\mathcal{A}$. 

\paragraph{Notation:} The \emph{disjoint union} of $G$ and $H$ is $(G, l, A, \sigma)\sqcup (H, i, B, \rho)=(G\sqcup H, (l,i), A\sqcup B,(\sigma , \rho))$.
$G\#_0 H$ is the \emph{clique $0$-sum} \cite{Sturmfels-Sullivant} of $G$ and $H$. 

Any disjoint partition of $G\sqcup H$ can be written as $(AC|BD)$ where $a=(A|B)$ is a $G$-partition and $b=(C|D)$ is a $H$-partition. So we think it as a product partition $a\times b$. From $a$ and $b$ it is also possible to construct $a\times b^*\neq a\times b$.
\begin{lem}\label{lem-elementary-operation}
Let $G$ be a graph in the classical case. 
Let $E=\{e_1, \dots, e_m\}$ and $A=\{a_1, \dots, a_k\}$.
Let $\sigma$ be a multiplicity map and $l:E\rightarrow A$ be a labeling map of $G$.
Then
\begin{compactdesc}
  \item[i)] Permuting the name of vertices corresponds to a permutation of the matrix columns.
  \item[ii)] There exists a unique matrix $M_{\sigma}$ such that $\mathcal{A}_{(G,\sigma)}=M_{\sigma}\mathcal{A}_{G}$. $M_{\sigma}$ is a block matrix
	\[
		M_{\sigma}=\left( \begin{array}{cc}
				 I_{\sigma}	& 0 		\\
				 0		&  I_{\sigma}	\\
	                  \end{array}\right) 
	\]
	where $I_{\sigma}=\operatorname{diag(\sigma(e_1),\dots, \sigma(e_m))}$.
  \item[iii)] There exists a unique matrix $R_{l}$ such that $\mathcal{A}_{(G,l,A)}=R_{l}\mathcal{A}_{G}$. This matrix is $2|A|\times 2|E|$ and it has the block form
	\[
		R_{l}=\left( \begin{array}{cc}
				 B_{l}	& 0 		\\
				 0		&  B_{l}	\\
	                  \end{array}\right) 
	\]
	where $B_l=(b_{i,j})$ is $|A|\times |E|$ and it is defined by $b_{(i,j)}=\delta_{a_i, l(e_j)}$.
\end{compactdesc}
Let $G$ and $H$ be graphs in the non-classical case.
\begin{compactdesc}
  \item[iii.bis)] Let $l'$ be a labeling constructed from $l$ by assigning to the elements in $l^{-1}(a_k)$ a unique element in $C=A\setminus \{a_k\}$. Then there exists a unique matrix $R_{l'}$ such that $\mathcal{A}_{(G,l',C)}=R_{l'}\mathcal{A}_{G}$. This matrix is $2|C|\times 2|A|$ and it has the block form
	\[
		R_{l'}=\left( \begin{array}{cc}
				 B_{l'}	& 0 		\\
				 0		&  B_{l'}	\\
	                  \end{array}\right) 
	\]
	where $B_{l'}=(b_{i,j})$ is $|C|\times |A|$ and it is defined by $b_{(i,j)}=\delta_{a_i, l'(a_j)}$.
 \item[iv)] $\mathcal{A}_{G\sqcup H}$ is made of the columns of $\mathcal{A}_{G} \# \mathcal{A}_{H}$ but each repeated twice.
 \item[v)] $\mathcal{A}_{G\#_0 H}=\mathcal{A}_{G} \# \mathcal{A}_{H}$.
\end{compactdesc}
\end{lem}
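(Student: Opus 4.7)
The plan is to unwind the definition of $\mathcal{A}_{(G,l,A,\sigma)}$ and verify each claim by direct inspection. After indexing the $2|A|$ rows by $\{s_a, t_a : a\in A\}$ and the columns by $\Pi_n$, the formula read off from $\phi_{(G,l,A,\sigma)}$ is
\[
(\mathcal{A}_{(G,l,A,\sigma)})_{s_a,\,A|B} = \sum_{\substack{e\in Cut_G(A|B)\\ l(e)=a}} \sigma(e), \qquad (\mathcal{A}_{(G,l,A,\sigma)})_{t_a,\,A|B} = \sum_{\substack{e\in E_G\setminus Cut_G(A|B)\\ l(e)=a}} \sigma(e).
\]
Since the $s$-rows and $t$-rows transform under identical rules, every matrix appearing in the statement decomposes into two identical diagonal blocks, explaining the repeated block shape of $M_\sigma$, $R_l$, $R_{l'}$.

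From here, items (i)--(iii.bis) fall out mechanically. For (i), a vertex permutation $\pi$ induces the bijection $A|B\mapsto \pi(A)|\pi(B)$ on $\Pi_n$ and preserves the cut set for the relabelled graph, so columns are simply permuted. For (ii), in the classical case each row is indexed by a single edge, so introducing $\sigma$ multiplies the $s_e$-row (resp.\ $t_e$-row) by $\sigma(e)$, which is precisely left multiplication by $M_\sigma$. For (iii), the explicit formula shows the $s_a$-row of $\mathcal{A}_{(G,l,A)}$ is the sum of the $s_e$-rows of $\mathcal{A}_G$ over $e$ with $l(e)=a$, and analogously for $t_a$; this is left multiplication by $R_l$ with $B_l$ the incidence matrix of $l$. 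Item (iii.bis) is the same row-aggregation one level higher: $l' = \tilde l \circ l$ for a surjection $\tilde l : A \to C$ collapsing $a_k$ onto a chosen element of $C$, and $B_{l'}$ is the incidence matrix of $\tilde l$.

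For (iv) and (v) the only genuine ingredient is keeping the unordered-partition bookkeeping straight. Every unordered partition of $V_G\sqcup V_H$ has the form $AC|BD$ with $\{A,B\}\in\Pi_{|V_G|}$ and $\{C,D\}\in\Pi_{|V_H|}$, and the flip $AC|BD \leftrightarrow AD|BC$ produces two distinct unordered partitions yielding the same pair of unordered $G$- and $H$-partitions (consistent with the count $|\Pi_{m+n}| = 2\,|\Pi_m|\,|\Pi_n|$). Because $\phi_{G\sqcup H}(r_{AC|BD}) = \phi_G(r_{A|B})\,\phi_H(r_{C|D})$ lives in disjoint variable sets, both partitions produce identical concatenated exponent columns, which is the ``each column twice'' assertion of (iv). In (v), the shared vertex $v$ of $G\#_0 H$ removes exactly this redundancy: an unordered partition of $V_{G\#_0 H}$ is determined by its restrictions to $V_G$ and $V_H$ once the side of $v$ is fixed consistently, so the two-to-one map becomes a bijection and $\mathcal{A}_{G\#_0 H}=\mathcal{A}_G\#\mathcal{A}_H$. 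The only delicate point throughout is this unordered-versus-ordered bookkeeping; once it is set up, every remaining verification is a routine row-by-row check.
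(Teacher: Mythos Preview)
Your proof is correct and follows essentially the same approach as the paper's: the paper dismisses (i)--(iii.bis) as ``elementary'' without further comment, and for (iv) and (v) it makes precisely your observation that the two $G\sqcup H$-partitions $a\times b$ and $a\times b^*$ cut the same edges, with the clique $0$-sum selecting exactly one of the two. Your write-up simply spells out the row-by-row verification that the paper leaves implicit.
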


\paragraph{Notation:} The columns of $\mathcal{A}_{G} \# \mathcal{A}_{H}$ are constructed mixing the columns of $\mathcal{A}_{G}$ and $\mathcal{A}_{H}$ in all the possible ways.

\begin{proof}
	\textbf{i)}, \textbf{ii)}, \textbf{iii)} and \textbf{iii.bis)} are elementary. 
	Regarding \textbf{iv)}, we observe that for each $a$ and $b$, respectively $G$ and $H$-partitions, the $G\sqcup H$-partitions $a\times b$ and $a\times b^*$ separate and leave together the same edges.
	\textbf{v)} holds because whatever pair of vertices $v\in V_G$ and $w\in V_H$ we choose for the clique $0$-sum $G\#_0 H$, one and only one of those partitions $a\times b$ and $a\times b^*$ leave the pair on one size.
\end{proof}

Using this matrix tricks we get some information about the cut variety:
\begin{thm}\label{teo-elementaty-operation}
Let $G$ be a graph in the classical case.
\begin{compactdesc}
 	\item[0)] $\Aff{S_n}$ is a point for each $n\in \mathbb{N}$.
	\item[1)] $\Aff{G}\cong \Aff{G,\sigma}$ and $I_{G}=I_{(G,\sigma)}$ for every multiplicity map $\sigma$.
\end{compactdesc}
Let $G$ and $H$ be graphs in the non-classical case.
\begin{compactdesc}
	\item[2)] $\Aff{G\sqcup H}\cong \Aff{G\#_0 H}$.
	\item[3)] $\phi_{G\sqcup H}=\phi_G\phi_H$.
	\item[4)] $\Aff{G\sqcup S_n}\cong \Aff{G}$ and $I_{G\sqcup S_n}=I_{G}'\oplus J$, where $J$ is generated only by linear relations.
	\item[5)] An arbitrary binomial lies in $I_{(G\sqcup H)}$ if and only if either it is linear of the form $r_{a\times b}-r_{a\times b^*}$ with $a$ (resp. $b$) disjoint partitions of the graph $G$ (resp. $H$) or it is non linear with the form
		\begin{equation}\label{eq-binomial-form}
  			r_{a_1\times b_1}\cdots r_{a_h\times b_h}-r_{a_{h+1}\times b_{h+1}}\cdots r_{a_{2h}\times b_{2h}},
		\end{equation}
		where
		\begin{equation}\label{eq-cond-1}
  			r_{a_1}\cdots r_{a_h}-r_{a_{h+1}}\cdots r_{a_{2h}}\in I_{(G, l, A, \sigma)},
		\end{equation}
		and
		\begin{equation}\label{eq-cond-2}
  			r_{b_1}\cdots r_{b_h}-r_{b_{h+1}}\cdots r_{b_{2h}}\in I_{(H, i, B, \rho)}.
		\end{equation}
\end{compactdesc}
\end{thm}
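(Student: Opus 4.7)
The plan is to address the six statements sequentially, using the matrix descriptions from Lemma~\ref{lem-elementary-operation} as the main tool.

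For Part 0, the graph $S_n$ has empty edge set, so $\phi_{S_n}(r_{A|B})$ is the empty product $1$ for every partition, and the image of $\phi_{S_n}$ is $\mathbb{K}$; hence $\Aff{S_n}$ is a single point. For Part 1, part ii) of Lemma~\ref{lem-elementary-operation} gives $\mathcal{A}_{(G,\sigma)} = M_\sigma\mathcal{A}_G$ with $M_\sigma$ block diagonal whose diagonal entries are the nonzero integers $\sigma(e)$; such a matrix is invertible over $\mathbb{Q}$, and since the toric ideal depends only on $\ker_{\mathbb{Z}}\mathcal{A}$, the equality $\ker_{\mathbb{Z}}(M_\sigma\mathcal{A}_G)=\ker_{\mathbb{Z}}\mathcal{A}_G$ yields $I_G=I_{(G,\sigma)}$ and hence the isomorphism of affine varieties. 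Part 3 is immediate from the definition: since $E_{G\sqcup H}=E_G\sqcup E_H$, the cut edges of $a\times b$ split as the cut edges of $a$ in $G$ together with the cut edges of $b$ in $H$, and the product monomial factors accordingly.

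For Part 2, parts iv) and v) of Lemma~\ref{lem-elementary-operation} together say that $\mathcal{A}_{G\sqcup H}$ differs from $\mathcal{A}_{G\#_0 H}=\mathcal{A}_G\#\mathcal{A}_H$ only by duplication of columns. Therefore the images of $\phi_{G\sqcup H}$ and $\phi_{G\#_0 H}$ in the respective target rings coincide as $\mathbb{K}$-subalgebras, and the associated affine toric varieties are isomorphic. Part 4 is the specialization $H=S_n$: since $S_n$ contributes no columns of its own and $G\#_0 S_n=G$ at the level of the defining monomial algebra, $\Aff{G\sqcup S_n}\cong \Aff{G}$. The extra ideal $J$ measures the duplication of columns in $\mathcal{A}_{G\sqcup S_n}$; each doubling contributes a linear binomial $r_{a\times b}-r_{a\times b^*}$, whence $J$ is generated by linear relations.

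The main content is Part 5. The key structural input is that, by Part 3, the exponent column in $\mathcal{A}_{G\sqcup H}$ indexed by $r_{a\times b}$ is the concatenation of the $\mathcal{A}_G$-column indexed by $r_a$ with the $\mathcal{A}_H$-column indexed by $r_b$. For the easier direction, one checks directly that both the linear binomials $r_{a\times b}-r_{a\times b^*}$ (using that $b=b^*$ as unordered $H$-partitions produces identical columns) and the nonlinear binomials of the form \eqref{eq-binomial-form} satisfying \eqref{eq-cond-1} and \eqref{eq-cond-2} are annihilated by $\phi_{G\sqcup H}$. For the converse, one takes a binomial $m_1-m_2\in I_{G\sqcup H}$, writes the equality of exponent vectors coming from $\phi_{G\sqcup H}(m_1)=\phi_{G\sqcup H}(m_2)$, and observes that the block structure forces the $G$-part of the equality to give \eqref{eq-cond-1} and the $H$-part to give \eqref{eq-cond-2}. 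The main obstacle is the combinatorial bookkeeping of the ambiguity $b\leftrightarrow b^*$: after aligning partitions on the two sides of the binomial up to this involution, any residual mismatch is exactly absorbed by linear binomials $r_{a\times b}-r_{a\times b^*}$, which explains the two-part dichotomy in the statement.
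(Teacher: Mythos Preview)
Your proposal is correct and follows essentially the same route as the paper's proof: the paper derives each item by pointing to the corresponding part of Lemma~\ref{lem-elementary-operation} (e.g., \textbf{1)} from \textbf{ii)}, \textbf{2)} from \textbf{iv)}+\textbf{v)}, \textbf{5)} from \textbf{3)}+\textbf{iv)}), and you simply spell out those deductions in more detail, including the kernel argument for \textbf{1)} and the block-splitting of exponent vectors for \textbf{5)}. The only difference is granularity, not strategy.
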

\begin{proof}
	$\phi_{S_n}$ sends all variables of $R_n$ (and $1$) to $1\in T_{\emptyset}=\mathbb{K}$; thus \textbf{0)} holds.
	\textbf{1)} follows from \textbf{ii)} and \textbf{2)} follows from \textbf{iv)} and \textbf{v)}. 
	\textbf{3)} is \textbf{iv)} translated with the homomorphism language.
	
	The first part of \textbf{4)} is a consequence of \textbf{2)}. 
	For the latter we observe that for any $S_n$-disjoint partition $(C|D)$, using \textbf{3)}, $r_{(A|BCD)}$ has the same image of $r_{(AC|BD)}$ and of all the other possible further combinations. 
	Thus $J$ is generated by those linear relations and $I_{G}'$ is constructed from $I_{G}$ by replacing the variable $r_{(A|B)}$ with $r_{(A|BCD)}$.
	Using \textbf{3)} and \textbf{iv)}, we obtain \textbf{5)}.
\end{proof}

\begin{obs}
	Example \ref{ex-classical-and-non-3-seg} shows that \textbf{2)} is not true for a non-classical setting.
\end{obs}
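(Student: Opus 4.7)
The plan is to take $G = H = K_2$, so that $G \#_0 H = P_3$, and use the three dimensions computed in Example \ref{ex-classical-and-non-3-seg} for $\Aff{P_3,\dots}$ (namely $3$, $2$, and $1$) to produce a pair of graphs for which $\Aff{G \sqcup H} \not\cong \Aff{G \#_0 H}$. Since $K_2$ has a single edge, specifying a label and a multiplicity on it is trivial, and all the non-classical content in the example lives on the identified graph $P_3$.

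Following case \textbf{b)} I would set $G = H = (K_2, l, \{a\}, 1)$ with $l$ sending the unique edge to $\{a\}$. The clique $0$-sum $(K_2,\{a\})\#_0(K_2,\{a\})$ is the shared-label graph $(P_3, \{a\})$, and the example gives $\operatorname{dim}\Aff{P_3, \{a\}} = 2$. The disjoint union, however, has by the notation of Section \ref{sec-elementary-operations} label set $A \sqcup B = \{a\} \sqcup \{a\} = \{a_1, a_2\}$, so the two edges are forcibly carried to distinct labels $a_1$ and $a_2$. The toric map then factors as an independent product over the two $K_2$-components in $\mathbb{K}[s_{a_1}, t_{a_1}, s_{a_2}, t_{a_2}]$, reducing to the classical cut ideal of two disjoint edges, and so $\operatorname{dim}\Aff{(K_2,\{a\})\sqcup(K_2,\{a\})} = 3$. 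The inequality $2 \neq 3$ refutes \textbf{2)}.

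Case \textbf{c)} produces an even wider gap: adding $\sigma(1,2) = -1,\ \sigma(2,3) = 1$ gives $\operatorname{dim}\Aff{P_3, \{a\}, \sigma} = 1$ from the example, whereas the corresponding disjoint union still factors independently over the two components with independent multiplicities and therefore keeps dimension $3$. Either case alone suffices, but having both illustrates that the dimension drop is not accidental.

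The main conceptual point, and precisely the content of the observation, is the asymmetry between $\sqcup$ and $\#_0$ once labels or multiplicities are non-trivial: identifying two vertices in the clique $0$-sum can collapse two previously separate edge labels onto a single element and thereby impose extra toric relations, while the convention $A \sqcup B$ for the disjoint union keeps the two alphabets strictly separate. This is exactly what breaks the identification of distinct columns of $\mathcal{A}$ coming from \textbf{iv)} and \textbf{v)} that underlies the proof of \textbf{2)} in the classical case; the three dimensions $3, 2, 1$ for $\Aff{P_3, \dots}$ provide the required counterexample.
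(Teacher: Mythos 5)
Your proposal is correct and follows essentially the same route as the paper: the observation carries no separate proof, but simply points to the dimension count of Example \ref{ex-classical-and-non-3-seg} ($\operatorname{dim}\Aff{P_3,\{a\}}=2$ and $\operatorname{dim}\Aff{P_3,\{a\},\sigma}=1$ versus $\operatorname{dim}\Aff{K_2\sqcup K_2}=3$, the latter confirmed by Example \ref{ex-G-3-SEG}), which is exactly the comparison you make. The only caveat is interpretive rather than mathematical: under the paper's literal conventions the disjoint union forces the label set $A\sqcup B$ and a simple collapse preserves labels, so one must read ``$G\#_0 H$ in a non-classical setting'' as the graph $G\#_0 H$ equipped with a merged label (and/or multiplicity), which is precisely how both you and the paper use $(P_3,\{a\})$.
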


\begin{obs}
	\textbf{5)} does not give a method to obtain a minimal base of gene\-rators for the ideal from the two minimal basis of $I_{G}$ and $I_{H}$.
\end{obs}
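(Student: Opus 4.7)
The plan is to justify this observation by producing a small explicit example in which the collection of binomials described by item \textbf{5)} is strictly larger than any minimal generating set of $I_{G \sqcup H}$. A natural test case is $G = H = P_3$ in the classical case. By Example \ref{ex-classical-and-non-3-seg}, each of $I_G$ and $I_H$ is principal, generated by a single quartic binomial $f_G = r_{1|23} r_{3|12} - r_{2|13} r_{123|\cdot}$ and its twin $f_H$, so the hypotheses of \textbf{5)} are as simple as possible.

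For this choice I would first enumerate the linear generators $r_{a \times b} - r_{a \times b^*}$ produced by \textbf{5)}, and then list every non-linear binomial of shape \eqref{eq-binomial-form}--\eqref{eq-cond-2} arising from $f_G$ and $f_H$. The key point is that the identity in \eqref{eq-cond-2} remains valid after replacing any $b_i$ by $b_i^*$, because the multidegree of $f_H$ is symmetric under $b \leftrightarrow b^*$; hence every such substitution delivers a formally different quartic binomial in $I_{G \sqcup H}$, even though the resulting polynomials only differ by an application of the linear relations $r_{a \times b} - r_{a \times b^*}$. Concretely, I would exhibit two distinct tuples $(a_1,b_1),\dots,(a_4,b_4)$ and $(a_1,b_1^*),\dots,(a_4,b_4^*)$ satisfying \eqref{eq-cond-1}--\eqref{eq-cond-2}, write down the two corresponding binomials, and show directly that their difference lies in the ideal generated by the linear part alone. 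This proves that one of the two quartics is redundant modulo the rest of the generators produced by \textbf{5)}, so the recipe of \textbf{5)} is not minimal.

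The main obstacle is combinatorial bookkeeping rather than any conceptual difficulty: one must track precisely how many distinct tuples $(b_1,\dots,b_{2h})$ yield the \emph{same} element of $I_{G\sqcup H}$ modulo the linear part, and confirm that enough quartics really do collapse. To make this rigorous I would either (a) carry out the reduction in $P_3 \sqcup P_3$ by hand, comparing the number of binomials output by \textbf{5)} with the minimal Betti numbers of $I_{P_3 \sqcup P_3}$, or (b) cross-check with a short Macaulay2 computation of the minimal free resolution. Either route would confirm that \textbf{5)} provides a complete, but in general non-minimal, set of generators, which is exactly what the observation asserts.
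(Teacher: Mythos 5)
Your argument is correct as far as it goes, but it takes a genuinely different route from the paper, and it establishes a weaker failure mode than the one the paper actually has in mind. The paper justifies this observation with Example \ref{ex-G-3-SEG}: for $G=H=K_2$ one has $I_{K_2}=(0)$, so the minimal basis of each factor is \emph{empty}, and yet $I_{K_2\sqcup K_2}$ contains the non-linear minimal generator $r_{1|234}r_{3|124}-r_{24|13}r_{1234|\cdot}$. Under the correspondence of \textbf{5)} this generator can only be produced from the zero binomial $r_{1|2}-r_{1|2}\in I_{K_2}$, which belongs to no minimal generating set. The obstruction recorded there is thus one of \emph{insufficiency}: feeding only minimal generators of $I_G$ and $I_H$ into (\ref{eq-binomial-form})--(\ref{eq-cond-2}) need not even generate $I_{G\sqcup H}$. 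Your $P_3\sqcup P_3$ plan instead documents \emph{redundancy}: since $A|B$ and $B|A$ name the same variable $r_{A|B}$, condition (\ref{eq-cond-2}) is insensitive to replacing $b_i$ by $b_i^*$, so many formally distinct binomials produced by \textbf{5)} coincide modulo the linear relations $r_{a\times b}-r_{a\times b^*}$. That is true, and it also shows the output of \textbf{5)} is not minimal; but redundancy could in principle be pruned away, whereas the paper's example shows the recipe fails already at the input stage, which is the point of the phrase ``from the two minimal basis of $I_G$ and $I_H$.'' Two smaller remarks: the generator $r_{1|23}r_{3|12}-r_{2|13}r_{123|\cdot}$ of $I_{P_3}$ is a quadric, not a quartic; and your write-up remains a plan --- the bookkeeping in $P_3\sqcup P_3$ (six vertices, hence $2^5=32$ variables) or the Macaulay2 verification still has to be carried out, whereas the paper's $K_2\sqcup K_2$ example settles the matter in one line.
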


\begin{ex}\em\label{ex-G-3-SEG}
  One has that $I_{K_2}=(0)$ and
\[
  I_{K_2\sqcup K_2}=\left( \begin{array}{c}
			  r_{1|234}r_{3|124} - r_{24|13} r_{1234|\cdot},\\
			  r_{1234|\cdot}-r_{12|34}, r_{4|123}-r_{3|124},\\
			  r_{2|134}-r_{1|234}, r_{13|24}-r_{14|23}
                        \end{array}\right).
\]
We have no generator in $I_{K_2}$ to construct $I_{K_2\sqcup K_2}$ using \textbf{5)}. Instead, we use binomials like $r_{1|2}-r_{1|2}\in I_{K_2}$.
\end{ex}
%

\section{The Collapse Operation}\label{sec-collapsing}
In this section, we study what happens to the cut ideals and the cut varieties after collapsing two vertices. 
In the first part, we study the \emph{simple} collapse, and then we will go to the \emph{singular} one. 
We see how the non-canonical labeling and non-trivial multiplicity appear naturally.

This is the collapse operation:

\begin{Def}\label{def-simple-collapsing-vertices}
  Let $(G, l, A,\sigma)$ be a graph and let $\{1,k\}\notin E$. 
  The \emph{graph obtained by collapsing the vertices $1$ and $k$} is denoted by $(G_{1\equiv k}, l', A', \sigma')$. 
  We define $G_{1\equiv k}=(\{1,\dots, k-1\};E')$ where $E'$ is obtained from $E_G$ by replacing $\{i,k\}\in E_G$ with $\{i,1\}$, and considering just one repetition; 
  the labeling map $l'$ is the same as $l$, but for all the edges in $e\in l^{-1}(l(i,k))$ we set $l'(e)=l(i,1)$;
  \[
    \sigma'(\{i,j\})=\begin{cases}
			  \sigma(\{i,j\}) 			&\text{ if  $ i\neq 1$;}\\
			  \sigma(\{1,j\})+\sigma(\{k,j\})	&\text{ otherwise.}
		      \end{cases}
  \]
  We say that the collapse is \emph{simple} if $|E_G|=|E_{G_{1\equiv k}}|$, and \emph{singular} otherwise.
\end{Def}

Only for singular collapse we will have that $A'\subsetneq A$: in fact we lose one of the labels of the collapsed edges. 

\begin{obs}
 	When we write $(G\sqcup H)_{k\equiv k+1}$ and $G\#_0 H$ we mean the same thing. We prefer the first if we want to specify the collapsing vertices, the second one if this is clear from the context.
	Moreover, every clique $i$-sum can be constructed as a sequence of collapses, but, of course, the opposite is false.
\end{obs}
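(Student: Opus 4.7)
The plan is to verify both assertions of the observation by directly unpacking Definition~\ref{def-simple-collapsing-vertices} and the clique sum construction.

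For the identity $(G\sqcup H)_{k\equiv k+1}=G\#_0 H$, I would fix $V_G=\{1,\dots,k\}$ and $V_H=\{k+1,\dots,k+m\}$ inside the disjoint union and note that $\{k,k+1\}\notin E_{G\sqcup H}$, since no edge joins $V_G$ to $V_H$. Hence the collapse is admissible. Moreover, no edge $\{i,k\}\in E_G$ coincides with any edge $\{j,k+1\}\in E_H$, so the replacement step $\{i,k+1\}\mapsto\{i,k\}$ produces no duplicates: the collapse is simple, the labeling map is unchanged, and the output multiplicity map is the concatenation of the input ones. This is exactly the data of the clique $0$-sum.

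For the claim that every clique $i$-sum arises from a sequence of collapses, I would argue by induction on $i$, the base case $i=0$ being handled above. For $i\geq 1$, let $\{v_1,\dots,v_i\}\subset V_G$ and $\{v_1',\dots,v_i'\}\subset V_H$ be the two copies of $K_i$ to be identified, and perform the collapses $v_j\equiv v_j'$ in sequence starting from $G\sqcup H$. Each step is admissible because $\{v_j,v_j'\}\notin E_{G\sqcup H}$ and earlier collapses identify different vertex pairs, so they cannot introduce an edge between the pair about to be merged. The subtle point, and the main obstacle, is the behaviour of multiplicities: each of the $\binom{i}{2}$ edges internal to the merged clique is contributed once by $G$ and once by $H$, so the additive rule in Definition~\ref{def-simple-collapsing-vertices} doubles the multiplicity on each of them. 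This is precisely why the non-classical framework of Section~\ref{sec-generalized-cut-ideal} is needed: the Sturmfels--Sullivant clique $i$-sum is recovered inside the non-classical category up to a rescaling of multiplicities on the glued $K_i$.

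The converse failure is immediate: collapses can be performed inside a single graph, whereas clique $i$-sums are by construction built from two graphs. For instance, collapsing the non-adjacent vertices $1$ and $3$ of $P_3$ yields an edge of doubled multiplicity which is not a clique $i$-sum of two smaller graphs along a common $K_i$ in any natural way.
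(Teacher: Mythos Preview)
The paper states this as an observation without proof, so there is nothing to compare your argument against; you have supplied the justification the paper leaves implicit.

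Your argument for the $0$-sum case is correct and matches exactly how the paper uses the identification elsewhere (e.g.\ in the line ``$G\#_0 H=(G\sqcup H)_{k\equiv k+1}$ is an example of simple collapse'' opening Section~4.1). Your inductive argument for general $i$ is also sound: the admissibility of each successive collapse follows for the reason you give, since only edges incident to the vertex being absorbed are altered, and none of these can produce a new edge between the next pair $\{v_j,v_j'\}$.

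The multiplicity-doubling you flag on the $\binom{i}{2}$ internal edges of the glued clique is a genuine point and is, as you say, precisely why the paper sets up the non-classical framework. You could sharpen your resolution slightly: the labeling after this sequence of collapses remains injective (each singular step merges two edges into one and assigns it a single surviving label), so Theorem~\ref{teo-elementaty-operation}\textbf{(1)} applies and the cut ideal of the collapsed graph coincides with that of $G\#_i H$ with trivial multiplicity. This makes precise the sense in which the observation holds, rather than leaving it at ``up to rescaling.''

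Your counterexample for the converse, $(P_3)_{1\equiv 3}$, is the same one the paper itself uses (Example~\ref{ex-no-simple-collapsing}).
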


\begin{ex}\em\label{ex-no-simple-collapsing}
	$K_2=(P_3)_{1\equiv 3}$. 
	This collapse is singular and it produces the graph with multiplicity given in Example \ref{ex-multiple-graph-K_2}.
\end{ex}


\begin{ex}\em
  The singular collapse can involve more than two edges. For example $G_{1\equiv 5}$, where
  \[
	G=(\{1,2,3,4,5\}, \{\{1,2\},\{1,3\},\{1,4\},\{5,2\},\{5,3\},\{5,4\}\}).
  \]
\end{ex}

\begin{Def}
  A disjoint partition $(A|B)$ is feasible for the collapse of $1$ and $k$ if $\{1,k\}\in Cut_{K_n}(A|B)$. 
\end{Def}
In other words we require that the collapsed vertices belongs both to either $A$ or $B$. 
If $p$ is a feasible partition then $r_p\in R_n$ is a \emph{feasible variable}.
Moreover, let $p$ be a disjoint partition of $[n-1]$, then we denote by $p_f$ and $p_{nf}$ the feasible and the non-feasible lifting to the partitions of $[n]$. 

The following theorem shows a pure combinatorial description of the ideal $I_{G_{1\equiv k}}$ from the ideal of $I_G$. This theorem is stated in the classical and non classical case for simple and singular collapse. 

\begin{thm}[Collapsing rules]\label{thm-collapsing-generic-form}
  Let $G$ be a graph and let $\{1, k\}\notin E$; one obtains $I_{G_{1\equiv k}}$ from $I_G$, using the following rules:
  \begin{compactdesc}
   \item [(kill)] Kill all the elements in $I_G$ having non-feasible variables; 
   \item [(substitute)] Substitute $r_{p_f}$, the variable of $R_G$, with $r_{p}$, variable of $R_{G_{1\equiv k}}$, in the 'surviving' elements of $I_G$.
  \end{compactdesc}
\end{thm}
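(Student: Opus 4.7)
The plan is to exhibit a ring monomorphism $\iota : R_{n-1} \hookrightarrow R_n$ defined by $r_p \mapsto r_{p_f}$, verify the commutativity $\phi_G \circ \iota = \phi_{G_{1 \equiv k}}$, and then read off the (kill)/(substitute) rules by taking kernels. The map $\iota$ is well defined because the feasible partitions of $[n]$ (those that keep $1$ and $k$ in the same block) are in canonical bijection with all partitions of $[n-1]$: given a partition $p$ of $[n-1]$, recover $p_f$ by adjoining $k$ to the block containing $1$. The image of $\iota$ is precisely the subring $S \subseteq R_n$ generated by the feasible variables.

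The heart of the argument is the monomial identity $\phi_G(r_{p_f}) = \phi_{G_{1 \equiv k}}(r_p)$, viewed inside $T_{A'} \subseteq T_A$, and I would verify it edge by edge on $G$. Edges $\{i,j\} \in E_G$ with $i, j \notin \{1,k\}$ contribute identical factors on both sides, since their label and multiplicity are unchanged by the collapse and their cut-status with respect to $p_f$ equals their cut-status with respect to $p$. The subtle contribution comes from the ``collapsing'' edges $\{1,j\}$ and $\{k,j\}$: because $p_f$ is feasible, $1$ and $k$ lie on the same side, so these two edges have identical cut-status and contribute either $s_{l(1,j)}^{\sigma(1,j)} s_{l(k,j)}^{\sigma(k,j)}$ or $t_{l(1,j)}^{\sigma(1,j)} t_{l(k,j)}^{\sigma(k,j)}$ to $\phi_G(r_{p_f})$. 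The definition of $l'$ identifies the two labels and the definition of $\sigma'$ adds the two multiplicities, so this product collapses to the single factor $s_{l'(1,j)}^{\sigma'(1,j)}$ or $t_{l'(1,j)}^{\sigma'(1,j)}$, which is exactly what the edge $\{1,j\} \in E_{G_{1 \equiv k}}$ contributes to $\phi_{G_{1 \equiv k}}(r_p)$. The simple-collapse cases where only one of $\{1,j\}, \{k,j\}$ lies in $E_G$ are degenerate instances of the same bookkeeping.

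With the commutative triangle in place, taking kernels yields $I_{G_{1 \equiv k}} = \iota^{-1}(I_G)$, and since $\iota$ maps $R_{n-1}$ isomorphically onto $S$ this equals $\iota^{-1}(I_G \cap S)$. Now $I_G \cap S$ is exactly the set of polynomials in $I_G$ whose support consists only of feasible variables, i.e., the ``surviving'' polynomials after step (kill); applying $\iota^{-1}$ to them is the renaming $r_{p_f} \mapsto r_p$, which is step (substitute). This gives the required description of $I_{G_{1 \equiv k}}$.

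I expect the main obstacle to lie in the singular case of the edge-by-edge identity, where one has to check that the combinatorial definitions of $\sigma'$ and $l'$ in Definition \ref{def-simple-collapsing-vertices} are precisely those forced by the toric equality $\phi_G(r_{p_f}) = \phi_{G_{1 \equiv k}}(r_p)$; the rest of the argument is formal, but this local matching of multiplicities and labels is what makes the (kill)/(substitute) recipe actually correct rather than merely approximate.
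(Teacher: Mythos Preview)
Your approach is essentially the paper's: build the inclusion $\iota:R_{n-1}\hookrightarrow R_n$, $r_p\mapsto r_{p_f}$, check compatibility with the $\phi$'s, and read off (kill)/(substitute) as $I_{G_{1\equiv k}}=\iota^{-1}(I_G)=\iota^{-1}(I_G\cap S)$. The paper does exactly this, splitting into the simple case (Lemma~\ref{lemma-simple-collapse}, Proposition~\ref{pro-collapse-simple}) and the singular case (Lemmas~\ref{lemma-singular-easy-collapse}--\ref{lemma-singular-collapse}, Proposition~\ref{pro-collapse-singular}), while you treat them uniformly.

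One point to tighten in the singular case: your identity $\phi_G(r_{p_f})=\phi_{G_{1\equiv k}}(r_p)$ ``viewed inside $T_{A'}\subseteq T_A$'' is not literally correct, because $\phi_G(r_{p_f})$ contains the factor $s_{l(k,j)}^{\sigma(k,j)}$ (or $t_{l(k,j)}^{\sigma(k,j)}$), which uses the label $l(k,j)\in A\setminus A'$ and hence does \emph{not} lie in the subring $T_{A'}$. What you actually verify edge-by-edge is $\pi\circ\phi_G\circ\iota=\phi_{G_{1\equiv k}}$, where $\pi:T_A\twoheadrightarrow T_{A'}$ is the \emph{surjection} identifying $s_{l(k,j)}\mapsto s_{l(1,j)}$, $t_{l(k,j)}\mapsto t_{l(1,j)}$; your sentence ``the definition of $l'$ identifies the two labels'' is exactly this $\pi$. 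The paper makes this explicit by writing the factorization $R_{n-1}\hookrightarrow R_n\xrightarrow{\phi_G}T_A\twoheadrightarrow T_{A'}$ and, equivalently, by first relabeling $G$ to $(G,l',A')$ via Lemma~\ref{lemma-singular-collapse} so that the collapsing pair already shares a label (reducing to Lemma~\ref{lemma-singular-easy-collapse}). Once the direction of that arrow is fixed, your kernel computation goes through; the reason the extra $\pi$ does not enlarge the kernel on $\iota(R_{n-1})$ is precisely your observation that for feasible $p_f$ the edges $\{1,j\}$ and $\{k,j\}$ always have the same cut-status, so on $\phi_G(S)$ the variables $s_{l(k,j)},t_{l(k,j)}$ carry no independent information.
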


We can construct every graph from some its subgraphs via disjoint unions and operations of simple collapse.
This idea holds also for the cut ideal: The key is to use Theorem \ref{teo-elementaty-operation}.\textbf{5)} and Theorem \ref{thm-collapsing-generic-form} as we show in the following example.
In contrast, in Example \ref{ex-generators}, we stress that Theorem \ref{thm-collapsing-generic-form} does not allow to control the gene\-rators of the collapsing cut ideal since the ones of $I_G$.

\begin{ex}\em\label{ex-4-seg-con}
We compute $I_{P_4}$ via the cut ideal of $P_3 \sqcup K_2$, using a suitable vertices collapse.
We label the vertices of $K_2$ with $4$ and $5$. 
We saw in Example \ref{ex-G-3-SEG} that $I_{P_3}=(r_{1|23}r_{3|12} - r_{2|13} r_{123|\cdot})$ and also that $I_{K_2}=(0)$. 
We know how to produce the linear relation between the variables (like $r_{a\times b}-r_{a\times b^*}$). 

Let us focus on the non linear part. 
We need to start from an element in $I_{P_3}$: for instance we have $r_{2|13}r_{3|12}-r_{2|13}r_{3|12}=0\in I_{P_3}$. 
and $r_{4|5}r_{45|\cdot}-r_{4|5}r_{45|\cdot}=0\in I_{K_2}$. 
Thus we compose them into
\[
  r_{2\mathbf{4}|13\mathbf{5}}r_{3\mathbf{54}|12}-r_{2|13\mathbf{54}}r_{3\mathbf{5}|12\mathbf{4}}
\]
obtaining an element of $I_{P_3 \sqcup K_2}$.
In similar way, by changing only the element in $I_{P_3}$, one obtains also:
\begin{eqnarray*}
  r_{\mathbf{4}|123\mathbf{5}}r_{12|3\mathbf{45}}&-&r_{123\mathbf{45}|}r_{3\mathbf{5}|12\mathbf{4}},\\
  r_{1|23\mathbf{54}}r_{3\mathbf{5}|12\mathbf{4}}&-&r_{12|3\mathbf{54}}r_{1\mathbf{4}|23\mathbf{5}},\\
  r_{1|23\mathbf{54}}r_{13\mathbf{5}|2\mathbf{4}}&-&r_{2|13\mathbf{54}}r_{1\mathbf{4}|23\mathbf{5}},\\
  r_{1|23\mathbf{54}}r_{\mathbf{4}|123\mathbf{5}}&-&r_{123\mathbf{54}|}r_{1\mathbf{4}|23\mathbf{5}}.
\end{eqnarray*}

Moreover, considering the non zero generator $r_{1|23}r_{3|12} - r_{123|\cdot}r_{2|13}$ of $I_{P_3}$ and  $r_{45|\cdot}r_{45|\cdot}-r_{45|\cdot}r_{45|\cdot}\in I_{K_2}$ one has the following element of $I_{P_3 \sqcup K_2}$:
\[
  r_{1|23\mathbf{45}}r_{3\mathbf{45}|12}-r_{123\mathbf{45}|\cdot}r_{2|13\mathbf{45}}.
\]
Thus, by changing the element in $I_{K_2}$, one has:
\begin{eqnarray*}
 r_{\mathbf{4}|123\mathbf{5}}r_{13\mathbf{5}|2\mathbf{4}}&-&r_{1\mathbf{4}|23\mathbf{5}}r_{3\mathbf{5}|12\mathbf{4}},\\
 r_{1|23\mathbf{45}}r_{3\mathbf{5}|124}&-&r_{\mathbf{4}|123\mathbf{5}}r_{2|13\mathbf{45}},\\	  
 r_{1|23\mathbf{45}}r_{3\mathbf{5}|124}&-&r_{123\mathbf{45}|\cdot}r_{13\mathbf{5}|2\mathbf{4}}.
\end{eqnarray*}
This completes the non linear generators of the cut ideal of $I_{P_3 \sqcup K_2}$. 

We compute $I_{P_4}$ using Theorem \ref{thm-collapsing-generic-form}: one collapses the vertices $3$ and $5$. 
Thus one has:
\[
    I_{P_4}=\left(\begin{aligned}
		    r_{13|24}r_{12|34}-r_{2|134}r_{3|124},\\
		    r_{4|123}r_{12|34}-r_{1234|}r_{3|124},\\
		    r_{1|234}r_{3|124}-r_{12|34}r_{14|23},\\
		    r_{1|234}r_{13|24}-r_{2|134}r_{14|23},\\
		    r_{1|234}r_{4|123}-r_{1234|}r_{14|23},\\
		    r_{4|123}r_{13|24}-r_{14|23}r_{3|124},\\
		    r_{1|234}r_{3|124}-r_{4|123}r_{2|134},\\
		    r_{1|234}r_{3|124}-r_{13|24}r_{1234|\cdot},\\
		    r_{1|234}r_{12|34}-r_{1234|}r_{2|134}
                 \end{aligned}\right).
\]
\end{ex}

\begin{ex}\em\label{ex-generators}
Let 
\[
  G=(\{1,2,3,4,5\},\{\{1,2\},\{1,3\},\{2,3\},\{2,4\},\{3,4\},\{4,5\}\}).
\]
Its cut ideal is
\[
  I_{G}=\left(\begin{aligned}
		    r_{13|245}r_{4|1235}-r_{123|45}r_{24|135},	r_{12|345}r_{24|135}-r_{13|245}r_{34|125},\\
		    r_{12|345}r_{4|1235}-r_{123|45}r_{34|125},	r_{12|345}r_{13|245}-r_{2|1345}r_{3|1245},\\	
		    r_{124,35}r_{2|1345}-r_{12|345}r_{24|135},	r_{124,35}r_{123|45}-r_{3|1245}r_{4|1235},\\	
		    r_{124,35}r_{13|245}-r_{3|1245}r_{24|135},	r_{124,35}r_{12|345}-r_{3|1245}r_{34|125},\\	
		    r_{14|235}r_{3|1245}-r_{124,35}r_{23|145},	r_{14|235}r_{123|45}-r_{23|145}r_{4|1235},\\	
		    r_{14|235}r_{13|245}-r_{23|145}r_{24|135},	r_{14|235}r_{12|345}-r_{23|145}r_{34|125},\\	
		    r_{134|25}r_{23|145}-r_{14|235}r_{2|1345},	r_{134|25}r_{3|1245}-r_{12|345}r_{24|135},\\	
		    r_{134|25}r_{123|45}-r_{2|1345}r_{4|1235},	r_{134|25}r_{13|245}-r_{2|1345}r_{24|135},\\	
		    r_{134|25}r_{12|345}-r_{2|1345}r_{34|125},	r_{134|25}r_{124,35}-r_{34|125}r_{24|135},\\	
		    r_{5|1234}r_{2|1345}-r_{134|25}r_{12345|\cdot},	r_{5|1234}r_{3|1245}-r_{124,35}r_{12345|\cdot},	\\
		    r_{5|1234}r_{123|45}-r_{12345|\cdot}r_{4|1235},	r_{5|1234}r_{13|245}-r_{12345|\cdot}r_{24|135},	\\
		    r_{5|1234}r_{12|345}-r_{12345|\cdot}r_{34|125},	r_{5|1234}r_{14|235}-r_{15|234}r_{4|1235}, \\
		    r_{1|2345}r_{4|1235}-r_{5|1234}r_{23|145},	r_{1|2345}r_{4|1235}-r_{14|235}r_{12345|\cdot},	\\
		    r_{1|2345}r_{34|125}-r_{12|345}r_{15|234},	r_{1|2345}r_{24|135}-r_{13|245}r_{15|234},	\\
		    r_{1|2345}r_{4|1235}-r_{123|45}r_{15|234},	r_{1|2345}r_{123|45}-r_{12345|\cdot}r_{23|145},	\\
		    r_{1|2345}r_{124,35}-r_{3|1245}r_{15|234},	r_{1|2345}r_{14|235}-r_{23|145}r_{15|234},	\\
		    r_{1|2345}r_{134|25}-r_{2|1345}r_{15|234},	r_{1|2345}r_{5|1234}-r_{12345|\cdot}r_{15|234}	\\
                 \end{aligned}\right).
\]
We collapse the vertices $1$ and $5$ obtaining $K_4$. The cut ideal of $K_4$ is
\[
  I_{K_4}= (r_{1|234}r_{2|134}r_{3|124}r_{4|123}-r_{1234|\cdot}r_{23|14}r_{12|34}r_{13|24})
\]
We want to compute it using the Theorem \ref{thm-collapsing-generic-form}. 
We observe that all the generators of $I_G$ contain at least one of the non feasible variables $r_{1|2345}$, $r_{5|1234}$, $r_{134|25}$, $r_{14|235}$, $r_{124,35}$, $r_{12|345}$, $r_{13|245}$ and $r_{123|45}$; hence all of them will be killed. 
One has that $r_{15|234}r_{2|1345}r_{3|1245}r_{4|1235}-r_{12345|\cdot}r_{23|145}r_{125|34}r_{135|24}$ is in the cut ideal $I_G$; this element survives because contains only feasible elements; moreover the collapsing substitution produces exactly the generator we wanted.
\end{ex}

The rest of this section is devoted to the prove of Theorem \ref{thm-collapsing-generic-form}. 
The simple and singular cases are different so we split the proof in two proposition analysing them separately.

\subsection{The Simple Collapse}
In this section we study the simple collapse of graphs in the classical and non-classical case. The simple collapse does not change the number of edges or the multiplicities of them and if we start from a graph with trivial multiplicity, then we obtain a graph with trivial multiplicity. 

$G\#_0 H=(G\sqcup H)_{k\equiv k+1}$ is an example of simple collapse where the cut varieties are isomorphic. 
This is not a general fact:
\begin{ex}\em\label{ex-collapsing-tria}
	$K_3$ could be seen as the collapse of $1$ and $4$ in $P_4$.
	We compute that
	\begin{eqnarray*}
	 	R_{P_4}&=&\frac{\mathbb{K}[r_{1|234}, r_{2|134}, r_{3|124}, r_{4|123}, r_{12|34}, r_{13|24}, r_{14|23}, r_{1234|\cdot}]}{I_{P_4}},\\
		R_{K_3}&=&\frac{\mathbb{K}[r_{1|23}, r_{2|13}, r_{3|12}, r_{123|\cdot}]}{(0)},
	\end{eqnarray*}
	where $I_{P_4}$ is generated by the nine quadratic equations in Example \ref{ex-4-seg-con}.
	Hence $\Aff{P_4} \not\cong \Aff{(P_4)_{(1\equiv 4)}}$.
\end{ex}

The matrix $\mathcal{A}_{G}$ implicitly gives an order of the variables of $R_n$. In what follow we use the letter $p$ to denote the partition of a variable $r_p$ and the letter $k$ to indicate that $r_k$ is the $k$-th variable in this order. 

\begin{lem}\label{lemma-simple-collapse}
	Let $G$ be a graph and let $\{1, k\}\notin E$. Let the collapse of $1$ and $k$ be simple. 
	Then there exist a finite number of matrices $C_{1\equiv k}$ so that  $\mathcal{A}_G C_{1\equiv k}=\mathcal{A}_{G_{1\equiv k}}$.
	If $\mathcal{A}_G$ is made of the block matrices $(F, N)$ where $F$ (resp. $N$) is the matrix of the exponents of the image of the feasible (resp. non feasible) variables, then $C_{1\equiv k}$ is a $2^{n-1} \times 2^{n-2}$ matrix and it has the following block form
	\[
	 	C_{1\equiv k}=\left(\begin{array}{c}
	 	                     \operatorname{id}\\
				     0
	 	                    \end{array}\right) 
	\]
\end{lem}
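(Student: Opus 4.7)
The plan is to exploit the fact that a simple collapse does not merge edges: by Definition \ref{def-simple-collapsing-vertices}, $|E_G|=|E_{G_{1\equiv k}}|$, so there is a canonical bijection $\beta:E_G\to E_{G_{1\equiv k}}$ sending $\{i,j\}\mapsto\{i,j\}$ when $i,j\notin\{1,k\}$ and $\{i,k\}\mapsto\{i,1\}$. Moreover the labels and multiplicities are preserved along $\beta$ (the formula for $l'$ in the simple case just renames, and the sum in the definition of $\sigma'$ reduces to one summand since only one of $\sigma(\{1,j\})$, $\sigma(\{k,j\})$ is defined). Consequently $A'=A$, and $\mathcal{A}_G$ and $\mathcal{A}_{G_{1\equiv k}}$ have the same number of rows, namely $2|A|$.

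Next I would count columns. The set $\Pi_n$ has $2^{n-1}$ elements; the feasible partitions (those in which $1$ and $k$ lie on the same side) are exactly $2^{n-2}$, and there is an obvious bijection $\iota:\Pi_{n-1}\to\Pi_n^{\text{feas}}$ sending $p'=A'|B'\in\Pi_{n-1}$ (with $1\in A'$, say) to its feasible lift $p_f=(A'\cup\{k\})|B'$. This already shows that $F$ and $\mathcal{A}_{G_{1\equiv k}}$ have the same shape. Since $\mathcal{A}_G=(F\mid N)$ by hypothesis, left-multiplying by $C_{1\equiv k}=\left(\begin{smallmatrix}\mathrm{id}\\ 0\end{smallmatrix}\right)$ of size $2^{n-1}\times 2^{n-2}$ projects onto $F$, so the lemma reduces to the column-by-column identity $F_{p_f}=\mathcal{A}_{G_{1\equiv k},\,p'}$ for $p'\in\Pi_{n-1}$.

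The core step is verifying this identity, i.e.\ that $\phi_{(G,l,A,\sigma)}(r_{p_f})=\phi_{(G_{1\equiv k},l',A,\sigma')}(r_{p'})$. For $p_f=(A'\cup\{k\})|B'$, an edge $\{i,j\}\in E_G$ with $i,j\neq 1,k$ lies in $Cut_G(p_f)$ iff $\beta(\{i,j\})=\{i,j\}\in Cut_{G_{1\equiv k}}(p')$, trivially. For an edge $\{i,k\}$, note that $k\in A'\cup\{k\}$, so $\{i,k\}\in Cut_G(p_f)$ iff $i\in B'$; and $\{i,1\}\in Cut_{G_{1\equiv k}}(p')$ iff $i\in B'$ since $1\in A'$. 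Because the simple hypothesis guarantees that $\{i,k\}\in E_G$ excludes $\{i,1\}\in E_G$, no edge is visited twice. Combining these checks with preservation of $l$ and $\sigma$ along $\beta$, the two exponent vectors coincide.

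Finally, the statement about a \emph{finite number} of matrices $C_{1\equiv k}$ simply reflects the free choice of ordering the $2^{n-2}$ columns of $\mathcal{A}_{G_{1\equiv k}}$ and the $2^{n-2}$ feasible columns of $\mathcal{A}_G$: each compatible pair of orderings yields one such $C_{1\equiv k}$, and its nonzero block is the permutation matrix realising $\iota$. The main (minor) obstacle is the bookkeeping in the edge case $\{i,k\}\mapsto\{i,1\}$, where one must verify that the \emph{feasibility} condition on $p$ is exactly what forces the cut-membership of $\{i,k\}$ in $G$ to agree with that of $\{i,1\}$ in $G_{1\equiv k}$; the identification of labels/multiplicities under $\beta$ then takes care of the rest.
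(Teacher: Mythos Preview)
Your argument is correct and follows the same strategy as the paper: both establish the key identity $\phi_{G_{1\equiv k}}(r_{p})=\phi_{G}(r_{p_f})$ via the feasible lift $\Pi_{n-1}\hookrightarrow\Pi_n$, and read off $C_{1\equiv k}$ as the matrix of this injection. The paper simply asserts the identity ``because the collapse is simple'' and identifies $C_{1\equiv k}$ with the map $\tilde{(\cdot)}$, whereas you spell out the edge bijection $\beta$, the preservation of $l$ and $\sigma$, and the cut-membership check for edges $\{i,k\}\mapsto\{i,1\}$; these are exactly the details underlying the paper's one-line claim. One minor slip: you write ``left-multiplying by $C_{1\equiv k}$'', but $\mathcal{A}_G C_{1\equiv k}$ is right-multiplication; the computation $(F\mid N)\begin{pmatrix}\mathrm{id}\\0\end{pmatrix}=F$ is nonetheless correct.
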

\begin{proof}
	By the assumption, $\{1, k\}$ is not an edge, so $T_{A_G}=T_{A_{G_{1\equiv k}}}$. 
	Fixing the collapse we fix the injection $\tilde{(.)}:R_{n-1}\hookrightarrow R_n$ where $p$ is sent to $p_f$.
	The collapsing is simple and so one has 
	\begin{equation*}\label{eq-phi-map}
	    \phi_{G_{1\equiv k}}(r_{p})=\phi_{G}(r_{p_f})=\phi_{G}(\tilde{r_{p}}).
	\end{equation*}
	Thus $\phi_{G_{1\equiv k}}$ factors through the composition $R_{n-1}\stackrel{\tilde{(.)}}{\hookrightarrow}R_n\stackrel{\phi_{G}}{\rightarrow}T_{A_G}$.
	The injection map $\tilde{(.)}$ corresponds to the matrix $C_{1\equiv k}$.
\end{proof}

\begin{pro}\label{pro-collapse-simple}
Theorem \ref{thm-collapsing-generic-form} holds for simple collapses.
\end{pro}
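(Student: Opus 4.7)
My plan is to read the (kill)+(substitute) procedure as a purely ring-theoretic operation and identify it with the ideal $I_{G_{1\equiv k}}$ via the factorization from Lemma \ref{lemma-simple-collapse}. The key identity is the one established in that lemma: in the simple case no edges are lost, so $T_{A_G} = T_{A_{G_{1\equiv k}}}$, and for every feasible partition $p$ of $[n-1]$ one has $\phi_{G_{1\equiv k}}(r_p) = \phi_G(r_{p_f})$, because $p_f$ keeps $1$ and $k$ on the same side of the partition, so exactly the same edges are separated and the same ones are left together. This yields the commutative triangle $\phi_{G_{1\equiv k}} = \phi_G \circ \widetilde{(\cdot)}$, where $\widetilde{(\cdot)}: R_{n-1} \hookrightarrow R_n$ is the inclusion $r_p \mapsto r_{p_f}$ whose image is the subring $R_n^{\mathrm{feas}}\subseteq R_n$ generated by the feasible variables.

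Next I would translate the procedure into this language. The (kill) step discards every element of $I_G$ that involves a non-feasible variable, leaving exactly $I_G \cap R_n^{\mathrm{feas}}$, and the (substitute) step is precisely the inverse isomorphism $\psi = \widetilde{(\cdot)}^{-1}: R_n^{\mathrm{feas}} \to R_{n-1}$. So the proposition reduces to the single equality
\[
\psi(I_G \cap R_n^{\mathrm{feas}}) \;=\; I_{G_{1\equiv k}}.
\]
Both inclusions drop out of the triangle: if $f \in I_G \cap R_n^{\mathrm{feas}}$ then $\phi_{G_{1\equiv k}}(\psi(f)) = \phi_G(f) = 0$, so $\psi(f) \in I_{G_{1\equiv k}}$; conversely for $g \in I_{G_{1\equiv k}}$ one has $\phi_G(\widetilde{g}) = \phi_{G_{1\equiv k}}(g) = 0$, so $\widetilde{g} \in I_G \cap R_n^{\mathrm{feas}}$ and $\psi(\widetilde{g}) = g$. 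Neither direction requires anything beyond the commutative triangle and the fact that $\psi$ is a ring isomorphism.

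The one subtlety I expect to have to address is how this set-level equality interacts with the computational reading of the theorem, where the procedure is applied to a binomial generating set of $I_G$ rather than to the whole ideal (as in Examples \ref{ex-4-seg-con} and \ref{ex-generators}). This amounts to showing that the contraction $I_G \cap R_n^{\mathrm{feas}}$ of the toric prime binomial ideal $I_G$ to the coordinate subring $R_n^{\mathrm{feas}}$ is again generated by the binomials of any binomial generating set of $I_G$ that happen to lie in $R_n^{\mathrm{feas}}$. This is the main technical point; the remainder is bookkeeping on feasible and non-feasible partitions.
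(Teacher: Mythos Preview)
Your argument is correct and is exactly the paper's approach: both use the factorization $\phi_{G_{1\equiv k}}=\phi_G\circ\widetilde{(\cdot)}$ from Lemma~\ref{lemma-simple-collapse} to identify $I_{G_{1\equiv k}}$ with $\psi(I_G\cap R_n^{\mathrm{feas}})$, though your version spells out both inclusions while the paper only writes one. The ``main technical point'' you flag about generating sets is not actually required: the statement of Theorem~\ref{thm-collapsing-generic-form} speaks of \emph{elements} of $I_G$, not of a chosen generating set, and indeed Example~\ref{ex-generators} is included precisely to illustrate that every listed generator may be killed while the surviving element comes from elsewhere in the ideal, so you may safely drop that last paragraph.
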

\begin{proof}
	Using the previous lemma we know that $\phi_{G_{1\equiv k}}$ factors through the composition $R_{n-1}\stackrel{\tilde{(.)}}{\hookrightarrow}R_n\stackrel{\phi_{G}}{\rightarrow}T_{A_G}$.
	If $x\in R_{G_{1\equiv k}}$ and  $\phi_{G_{1\equiv k}}(x)=0$ then $\phi_{G}(\tilde{x})=0$, where $\tilde{x}$ is the lifting of $x$ in $R_n$. 
	This prove the $(substitute)$ rule. 
	The $(kill)$ property follows form the fact that $p_{nf}$ is a lifting that does not correspond to any partition in $G_{1\equiv k}$.
\end{proof}

\subsection{The Singular Collapse}
The notions of multiplicity and labeling that we introduced deal with the singular collapse.

We note that before and after a singular collapse the domain changes because the number of vertices change as well; the codomain changes because we decrease the number of the edges decrease. 

\begin{lem}\label{lemma-singular-easy-collapse}
	Let $G$ be a graph and let $\{1, k\}\notin E$. Let the collapse of $1$ and $k$ be singular and let the collapsing pairs of edges have the same labels for each pair. 
	Then there exist a finite number of matrices $C_{1\equiv k}$ so that $\mathcal{A}_G C_{1\equiv k}=\mathcal{A}_{G_{1\equiv k}}$ with the same form of Lemma \ref{lemma-simple-collapse}.
\end{lem}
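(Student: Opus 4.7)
The plan is to mirror the strategy used in Lemma \ref{lemma-simple-collapse}, adapting it to account for the fact that singular collapse actually identifies some edges. The starting observation is that the hypothesis ``collapsing pairs of edges have the same labels'' means that no label is lost when we pass from $E_G$ to $E_{G_{1\equiv k}}$: whenever two edges $\{i,1\}$ and $\{i,k\}$ are identified, they share the common label $l(i,1)=l(i,k)$, so $l'$ takes values in the same alphabet as $l$, that is, $A_{G_{1\equiv k}} = A_G$. In particular $T_{A_G} = T_{A_{G_{1\equiv k}}}$, so the codomain of $\phi_G$ and $\phi_{G_{1\equiv k}}$ coincide, and one can again define the injection $\tilde{(\cdot)}:R_{n-1}\hookrightarrow R_n$, $r_p\mapsto r_{p_f}$.

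The next step is to verify the key identity $\phi_{G_{1\equiv k}}(r_p)=\phi_G(r_{p_f})$ for every feasible partition $p$ of $[n-1]$. Edges of $G$ that do not involve the vertices $1$ or $k$ contribute to both sides identically. For the interesting case, consider an edge $\{i,1\}\in E_{G_{1\equiv k}}$ that arises from identifying $\{i,1\}$ and $\{i,k\}$ in $E_G$. Since $p_f$ places $1$ and $k$ on the same side, the two original edges are simultaneously cut or simultaneously non-cut by $p_f$. Their joint contribution to $\phi_G(r_{p_f})$ is therefore $s_{l(i,1)}^{\sigma(i,1)}s_{l(i,k)}^{\sigma(i,k)}$ (or the analogous $t$-monomial); using $l(i,1)=l(i,k)=l'(i,1)$ and $\sigma'(i,1)=\sigma(i,1)+\sigma(i,k)$, this collapses to $s_{l'(i,1)}^{\sigma'(i,1)}$, which is exactly the contribution in $\phi_{G_{1\equiv k}}(r_p)$. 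Edges incident to $1$ or $k$ that have no ``partner'' are left untouched by $l'$ and $\sigma'$, and the feasibility of $p$ again matches their cut-status in $p_f$ with that in $p$.

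Putting these monomial identities together yields the factorization
\[
    \phi_{G_{1\equiv k}}:R_{n-1}\stackrel{\tilde{(\cdot)}}{\hookrightarrow}R_n\stackrel{\phi_G}{\longrightarrow}T_{A_G},
\]
and the matrix realizing $\tilde{(\cdot)}$ on the monomial basis is precisely the column-selection matrix $C_{1\equiv k}$ described in Lemma \ref{lemma-simple-collapse}: after reordering the columns of $\mathcal{A}_G$ as $(F,N)$ according to feasible/non-feasible partitions, $C_{1\equiv k}$ is the identity in the feasible block and zero in the non-feasible block, whence $\mathcal{A}_G\,C_{1\equiv k}=\mathcal{A}_{G_{1\equiv k}}$. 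The finiteness comes from the freedom in ordering the partitions.

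The main obstacle is the bookkeeping in the middle step, namely ensuring that the additivity $\sigma'(i,1)=\sigma(i,1)+\sigma(i,k)$ combined with equal labels really does reproduce the collapsed contribution \emph{in $\mathbb{Z}$-exponents}, including when $\sigma$ takes negative values, and that the case where only one of $\{i,1\},\{i,k\}$ exists in $E_G$ is handled consistently (it is: such edges are not ``collapsing pairs'' and pose no issue). Once this case distinction is made carefully, the proof reduces to the same factorization argument as in the simple-collapse case.
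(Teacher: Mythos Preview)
Your proposal is correct and follows essentially the same approach as the paper: the paper's proof is a one-line remark that the argument of Lemma~\ref{lemma-simple-collapse} goes through because feasible partitions separate or leave together the collapsing edges simultaneously, and you have simply unpacked this into a careful edge-by-edge verification of the identity $\phi_{G_{1\equiv k}}(r_p)=\phi_G(r_{p_f})$, including the multiplicity bookkeeping $\sigma'(i,1)=\sigma(i,1)+\sigma(i,k)$.
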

\begin{proof}
	The proof follows as in Lemma \ref{lemma-simple-collapse}. (\ref{eq-phi-map}) holds because the feasible partitions separate or divide, at the same time, the collapsing edges.
\end{proof}

\begin{lem}\label{lemma-singular-collapse}
	Let $G$ be a graph and let $\{1, k\}\notin E$. Let the collapse of $1$ and $k$ be singular. 
	Then there are a finite number or matrices $C_{1\equiv k}$ such that  $(R_{l'} \mathcal{A}_G) C_{1\equiv k} =\mathcal{A}_{G_{1\equiv k}}$, where
	\begin{compactitem}
	    \item $l'$ is the labeling such that each collapsing couple of edges has the same labels;
	    \item $C_{1\equiv k}$ is the matrix of the collapse as in the previous lemma.
	\end{compactitem}
\end{lem}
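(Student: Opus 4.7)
The approach is to reduce the general singular case to the situation already handled in Lemma \ref{lemma-singular-easy-collapse} by pre-composing with a relabeling that identifies the labels of each pair of collapsing edges. The key observation is that a singular collapse may glue two edges that originally carried different labels; once those labels are forced to agree the ``easy'' singular lemma applies verbatim.

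First, I would construct $l'$ explicitly. For each vertex $j$ that is a neighbour of both $1$ and $k$, the edges $\{1,j\}$ and $\{k,j\}$ are identified by the collapse; set $l'(\{1,j\}) = l'(\{k,j\}) = l(\{1,j\})$ and let $l'$ agree with $l$ on every other edge. This is precisely the relabeling in the statement, and the codomain $A'$ is obtained from $A$ by discarding the labels $l(\{k,j\})$ that were absorbed. By Lemma \ref{lem-elementary-operation}.iii.bis there is a unique matrix $R_{l'}$ satisfying $\mathcal{A}_{(G,l',A')} = R_{l'}\, \mathcal{A}_G$.

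Next, the labelled graph $(G,l',A',\sigma)$ by construction meets the hypothesis of Lemma \ref{lemma-singular-easy-collapse}: every pair of collapsing edges carries the same label. That lemma supplies a finite family of matrices $C_{1\equiv k}$, of the block form described in Lemma \ref{lemma-simple-collapse}, such that $\mathcal{A}_{(G,l',A')}\, C_{1\equiv k} = \mathcal{A}_{G_{1\equiv k}}$. Substituting the factorization of $\mathcal{A}_{(G,l',A')}$ gives
\[
  (R_{l'}\, \mathcal{A}_G)\, C_{1\equiv k} \;=\; \mathcal{A}_{G_{1\equiv k}},
\]
which is the identity claimed.

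The one point that requires attention, and which I expect to be the main (though mild) obstacle, is checking that this two-step procedure records the correct multiplicities on $G_{1\equiv k}$, namely $\sigma'(\{1,j\}) = \sigma(\{1,j\}) + \sigma(\{k,j\})$ from Definition \ref{def-simple-collapsing-vertices}. This comes for free from the relabeling: for a feasible partition the image of $r_p$ under $\phi_{(G,l',A',\sigma)}$ contains the factor $s_{l'(\{1,j\})}^{\sigma(\{1,j\})} s_{l'(\{k,j\})}^{\sigma(\{k,j\})} = s_a^{\sigma(\{1,j\})+\sigma(\{k,j\})}$, with the analogous statement in the $t$-variables, so the sum-of-multiplicities rule of $\sigma'$ is recovered automatically and no additional matrix beyond $R_{l'}$ and $C_{1\equiv k}$ is needed.
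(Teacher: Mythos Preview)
Your proposal is correct and follows essentially the same approach as the paper: relabel so that each collapsing pair of edges shares a label (invoking Lemma \ref{lem-elementary-operation}.iii.bis to obtain $R_{l'}$), then apply Lemma \ref{lemma-singular-easy-collapse} to the relabelled graph to get $C_{1\equiv k}$ and compose. The only cosmetic difference is that the paper first reduces, without loss of generality, to the case of a single collapsing pair $\{1,l\},\{l,k\}$, whereas you treat all collapsing pairs simultaneously; your additional remark on the multiplicities is deferred in the paper to the proof of Proposition \ref{pro-collapse-singular}.
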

\begin{proof}
	Without loss of generality we can assume that the singular collapse involves only two edges: $\{1, l\}$ and $\{l, k\}$. 
	Any singular collapse splits in two steps.
	We work in the non-classical setting, so it is possible that $\{1, l\}$ and $\{l, k\}$ have the same label; if not, we relabel them with the same one, $a$. We call the new label $l'$.
	This gives a labelled graph $(G, l', A', \sigma)$. Using \textbf{iii.bis)} of Lemma \ref{lem-elementary-operation} to the relabeling correspond a unique matrix $R_{l'}$.
	We collapse the two vertices of $(G, l', A', \sigma)$. 
	Using the previous lemma, there is a a matrix $C_{1\equiv k}$ controlling the collapse.
\end{proof}
In other words, before we collapse the two vertices in the codomain, that is we let the two edges ($\{1, l\}$, $\{l, k\}$) be considered as a unique edge in $T_A$; then we collapse the two vertices in the domain, that is we select the feasible partition of $[n-1]$.
\begin{pro}\label{pro-collapse-singular}
	Theorem \ref{thm-collapsing-generic-form} holds for singular collapse. 
\end{pro}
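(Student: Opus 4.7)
The plan is to parallel the proof of Proposition \ref{pro-collapse-simple}, using Lemma \ref{lemma-singular-collapse} in place of Lemma \ref{lemma-simple-collapse}. That lemma provides the matrix identity $\mathcal{A}_{G_{1\equiv k}} = (R_{l'}\mathcal{A}_G)\,C_{1\equiv k}$, which at the level of toric homomorphisms amounts to the factorization $\phi_{G_{1\equiv k}} = \phi_{(G,l',A',\sigma)} \circ \tilde{(.)}$, where $\tilde{(.)}\colon R_{n-1} \hookrightarrow R_n$ is the feasible lifting $r_p \mapsto r_{p_f}$. Hence $x \in I_{G_{1\equiv k}}$ if and only if $\tilde{x} \in I_{(G,l',A',\sigma)}$. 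The (kill) rule is encoded in the image of $\tilde{(.)}$, which touches only feasible variables, and the (substitute) rule is precisely the inverse of $\tilde{(.)}$ on its image.

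What remains is to compare $I_{(G,l',A',\sigma)}$ with $I_G$ on the set of binomials supported on feasible variables. The inclusion $I_G \subseteq I_{(G,l',A',\sigma)}$ is automatic, because the relabeling $l'$ corresponds to a surjective monomial map $M\colon T_{A_G}\to T_{A'}$ with $\phi_{(G,l',A',\sigma)}=M\circ\phi_G$. For the reverse inclusion on feasible monomials, I would use the reduction in the proof of Lemma \ref{lemma-singular-collapse} and treat a single collapsing pair $\{1,j\},\{k,j\}$ with $G$-labels $a=l(1,j)$ and $b=l(k,j)$ that $l'$ merges. Feasibility forces every lifted partition $p_f$ either to separate both edges or to unite both, so the exponent vector of $\phi_G(r_{p_f})$ carries $\sigma(1,j)$ at row $s_a$ exactly when it carries $\sigma(k,j)$ at row $s_b$, and similarly for the $t$-rows. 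Writing $\tilde{x}=\sum_i c_i\,r_{p_{f,i}}$ and letting $S$ index those summands whose partition separates the collapsing pair, vanishing of the merged $s$-coordinate of $\phi_{(G,l',A',\sigma)}(\tilde{x})$ reads $(\sigma(1,j)+\sigma(k,j))\sum_{i \in S} c_i = 0$. Since the collapsed multiplicity $\sigma'(1,j)=\sigma(1,j)+\sigma(k,j)$ is a valid (hence nonzero) multiplicity, we obtain $\sum_{i \in S} c_i = 0$, and therefore the $s_a$- and $s_b$-row sums of $\phi_G(\tilde{x})$ vanish individually. The same argument handles the $t$-rows; on the rows that $R_{l'}$ leaves untouched there is nothing to check. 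This shows $\tilde{x}\in I_G$.

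The only subtle point is the non-vanishing of $\sigma'(1,j)$, which is the single ingredient needed to invert the merging step in the relabeled target; this is built into the well-posedness of the collapse operation itself. Once the coincidence of $I_G$ and $I_{(G,l',A',\sigma)}$ on feasible monomials is established, the (kill) and (substitute) rules follow as in the proof of Proposition \ref{pro-collapse-simple}.
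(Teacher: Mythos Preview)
Your overall strategy coincides with the paper's: factor $\phi_{G_{1\equiv k}}$ as $\phi_{(G,l',A',\sigma)}\circ\tilde{(.)}$ via Lemma~\ref{lemma-singular-collapse}, and read off the (kill)/(substitute) rules from the feasible lifting. The paper stops essentially there: it verifies through equation~(\ref{eq-phi-possibility}) that on feasible liftings only the last two cases occur, so $\phi_{(G_{1\equiv k})}(r_p)=\phi_{(G,l',A')}(\tilde r_p)$, and declares the rules proved. In particular the paper never explicitly argues that passing from $I_{(G,l',A',\sigma)}$ back to the original $I_G$ is harmless on the feasible subring; you do, and you correctly isolate the one nontrivial hypothesis this needs, namely $\sigma'(1,j)=\sigma(1,j)+\sigma(k,j)\neq 0$.

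The one genuine defect is in how that extra step is written. A toric ideal is detected at the level of exponent vectors: $r^u-r^v\in I_G$ iff $\mathcal A_G(u-v)=0$. Your argument should live there. Instead you write ``$\tilde x=\sum_i c_i\,r_{p_{f,i}}$'' and speak of ``the merged $s$-coordinate of $\phi_{(G,l',A',\sigma)}(\tilde x)$'', which literally describes a linear polynomial and a sum of monomials in $T_{A'}$, for which no such coordinate exists. What you mean is: for a feasible binomial $r^u-r^v$ set $w=u-v$; on feasible columns the $s_a$- and $s_b$-rows of $\mathcal A_G$ are $\sigma(1,j)$ and $\sigma(k,j)$ times the same indicator vector, so $R_{l'}\mathcal A_Gw=0$ forces $(\sigma(1,j)+\sigma(k,j))\cdot(\text{that row}\cdot w)=0$, whence each row separately annihilates $w$ and $\mathcal A_Gw=0$. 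Rewritten this way the step is correct and completes the proof exactly as you intend. One caveat: your row-proportionality argument assumes the labels $a=l(1,j)$ and $b=l(k,j)$ are not shared by edges outside the collapsing pair; in the classical setting (canonical labeling) this is automatic, but in a genuinely non-classical $G$ you should say why the relabeling $l'$ of Definition~\ref{def-simple-collapsing-vertices} does not interfere with other edges carrying the same label.
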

\begin{proof}
	Without lost of generality we assume that the singular collapse involves only two edges: $\{1, l\}$ and $\{l, k\}$. 
	Using the previous lemma we know that $\phi_{(G_{1\equiv k}, l, A, \sigma)}$ factors through
	\[
	    R_{n-1}\stackrel{\tilde{(.)}}{\hookrightarrow}  R_{n} \stackrel{\phi_{G}}{\rightarrow} T_{G} \twoheadrightarrow T_{A_{G_{1\equiv k}}}.
	\]
	Let $a$ be the label of the two collapsing edges and let $l'$ be the new labeling. 
	If $r_{q}\in R_{n}$, looking at the map $\phi_{G}$, one sees that
	\begin{equation}\label{eq-phi-possibility}
	    \phi_{(G,l',A')}(r_{q})=\begin{cases}
			\cdots s_{a}^{\sigma(1,l)}t_{a}^{\sigma(l,k)} \cdots	&\text{ $q$ sep. $\{1,l\}$ but not $\{l,k \}$;}\\
			\cdots s_{a}^{\sigma(l,k)}t_{a}^{\sigma(1,l)} \cdots	&\text{ $q$ sep. $\{l,k \}$ but not $\{1,l\}$;}\\
			\cdots s_{a}^{\sigma(1,l)+\sigma(l,k)} \cdots 		&\text{ $q$ sep. both $\{1,l\}, \{l,k \}$;}\\
			\cdots t_{a}^{\sigma(1,l)+\sigma(l,k)} \cdots 		&\text{ otherwise. }
				      \end{cases}
	\end{equation}
	After the collapsing of $1$ and $k$, following the notation of the previous lemma, we get the graph $(G_{1\equiv k}, l', A', \sigma')$. The collapse produces a change of the multiplicity of the edge $\{1,l\}$: $\sigma'(1,l)=\sigma(1,l)+\sigma(l,k)$. 
	Let $r_{p}\in R_{n-1}$, one has 
	\[
	    \phi_{(G_{1\equiv k}, l', A', \sigma)}(r_{p})=\begin{cases}
			\cdots s_{a}^{\sigma(1,l)+\sigma(l,k)} \cdots &\text{ $p$ separates the vertices $1$ and $l$;}\\
			\cdots t_{a}^{\sigma(1,l)+\sigma(l,k)} \cdots &\text{ otherwise.}
							\end{cases}
	\]
	The maps $\phi_{(G, l', A', \sigma)}$ and $\phi_{(G_{1\equiv k},l', A', \sigma')}$ are coherent: if $p=A|B$ is a $(n-1)$-partition, then $\phi_{(G_{1\equiv k}, \sigma)}(r_{p})=\phi_{(G, l, A)}(\tilde{r_{p}})$, where $\tilde{r_{p}}$ is the usual lifting of $r_p$. 
	There are no partitions $p$ of $[n-1]$ such that $p'$ separates only one of the edges $\{1,l\}, \{l,k \}$: this implies that the first case of the equation (\ref{eq-phi-possibility}) is not possible after the collapsing of $1$ and $k$.
\end{proof}

\section*{Acknowledgements}
I thank Ralf Fr\"{o}berg for the incredible help during these last nine months. 
I thank Alexander Engstr\"{o}m for introducing me to this topics and for finding Example \ref{ex-generators}.
This article would not have this short look without the help and the corrections of Brun\"{o} Benedetti.

Finally, this work began during the Summer School Pragmatic 2011. I thank again the organizers.

\addcontentsline{toc}{section}{\textbf{Bibliography}}

\end{document}